\newcommand{\D}{\mathcal{D}}
\newcommand{\I}{\mathcal{I}}
\newcommand{\norm}[1]{\left\lVert#1\right\rVert}
\date{\today}
\newtheorem{theorem}{Theorem}[section]
\newtheorem{lemma}[theorem]{Lemma}
\newtheorem{proposition}[theorem]{Proposition}
\theoremstyle{definition}
\newtheorem{definition}[theorem]{Definition}
\theoremstyle{remark}
\numberwithin{equation}{section}
\begin{document}

\title[New numerical scheme of Atangana-Baleanu fractional integral]
{New numerical scheme of Atangana-Baleanu fractional integral: an application to groundwater flow within leaky aquifer}

\author[Djida]{J.D. Djida}
\address[Djida]{African Institute for Mathematical Sciences (AIMS), P.O. Box 608, Limbe Crystal Gardens, South West Region, Cameroon. }
\email[Djida]{jeandaniel.djida@aims-cameroon.org}
\author[Area]{I. Area}
\address[Area]{Departamento de Matem\'atica Aplicada II,
E.E. Aeron\'autica e do Espazo, Universidade de Vigo,
Campus As Lagoas s/n, 32004 Ourense, Spain.}
\email[Area]{area@uvigo.es}
\author[Atangana]{A. Atangana}
\address[Atangana]{Institute for Groundwater Studies, Faculty of Natural and Agricultural Sciences, University of the Free State, 9301, Bloemfontein, South Africa.}
\email[Atangana]{abdonatangana@yahoo.fr}

\thanks{The first author is indebted to the AIMS-Cameroon 2015--2017 tutor fellowship. The work of I. Area has been partially supported by the Ministerio de Ciencia e Innovaci\'on of Spain under grant MTM2016--75140--P, co-financed by the European Community fund FEDER.}

\keywords{Fractional differential equations; Atangana-Baleanu fractional derivative; Fractional initial value problem; groundwater flow ; Existence and uniqueness.}

\begin{abstract}
Many physical problems can be described using the integral equations  known as Volterra equations. There exists quite a number of analytical methods that can handle these equations for linear cases. For non-linear cases, a numerical scheme is needed to obtain an approximation. Recently a new concept of fractional differentiation was introduced using the Mittag-Leffler function as kernel, and the associate fractional integral was also presented. Up to this point there is no numerical approximation of this new integral in the literature. Therefore, to accommodate researchers working in the field of numerical analysis, we propose in this paper a new numerical scheme for the new fractional integral. To test the accuracy of the new numerical scheme, we first revisit the groundwater model within a leaky aquifer by reverting the time classical derivative with the Atangana-Baleanu fractional derivative in Caputo sense. The new model is solved numerically by using this new scheme.
\end{abstract}

\maketitle
\section{Introduction}
The concept of fractional differentiation with non-singular and non-local kernel has been suggested recently and is becoming a hot topic in the field of 
fractional calculus. The concept was tested in many fields including chaotic behaviour, epidemiology, thermal science, hydrology and mechanical 
engineering \cite{Abdon2, Ivan2015, Abdon_Nieto}. The numerical approximation of this differentiation was also proposed in \cite{Abdon_Nieto}. 
In the recent decade, the integral equations were revealed to be great mathematical tools to model many real world problems in several fields of science,
Technology and Engineering. In many research papers under some conditions (see e.g. \cite{Meerschaert, Zhuang, Hao} and references therein), it was proven that there is equivalence between a given differential equation and its integral equation associate. 

\medskip
Recently \cite{Zhuang} proposed a method based on a semi-discrete finite difference approximation in time and Galerkin finite element method in space. In this work we propose a new numerical approximation of Atangana-Baleanu integral which is the summation of the average of the given function and its fractional integral in Riemann-Liouville sense. This numerical scheme will be validate by solving the partial differential equation describing the 
subsurface water flowing within a confined aquifer model with the new derivative with fractional order in time component.\medskip

This paper is organized as follows: In Section \ref{Sec:2}, for the convenience of the reader, we recall some definitions and properties of fractional 
Calculus within the scope of Atangana-Baleanu. In Section \ref{Sec:3} a numerical approach of Atangana-Baleanu derivative with fractional order is 
introduced.
In Section \ref{Sec:4}, as application of the new numerical approximation of Atangana-Baleanu fractional integral. We study analytically and numerically
the model of groundwater flow within a leaky aquifer based upon the Mittag-Leffler function. Finally, Section \ref{Sec:5} is dedicated to our perspectives
and conclusions.

\section{Somes definitions of the Atangana-Baleanu fractional derivative and integral}\label{Sec:2}
We recall the definitions of the new derivative with non singular kernel and integral introduced by Atangana and Baleanu in the senses of Caputo and 
Riemann-Liouville derivatives \cite{Abdon2,Abdon3}.\medskip

Let $(a,b) \subset {\mathbb{R}}$ and let $u$ be a function of the Hilbert space $L^{2}(a,b)$. We define  by $u'$ the derivative of $u$ as distribution on 
$(a,b)$.
\begin{definition}
The Sobolev space of order $1$ in $(a,b)$ is defined as
\[
H^{1}(a,b)=\{ u \in L^{2}(a,b) \,\vert\, \,\, u' \in L^{2}(a,b) \}.
\]
\end{definition}

\begin{definition}\label{Atangana-Baleanu_Caputo}
Let $\alpha \in (0,1)$ and a function $u \in H^{1}(a,b)$, $b > a$. The Atangana-Baleanu fractional derivative in Caputo sense of order $\alpha$ of $u$ with
a based point $a$ is defined as
\begin{equation}\label{Atangana-Baleanu_Caputo:definition}
^{\textsc{ABC}}\D_t^{\alpha}u\,(t)= \frac{B(\alpha)}{1 - \alpha}\int_{a}^{t} u'(s)E_{\alpha}\bigg[-\frac{\alpha}{1-\alpha}\big(t - s\big)^{\alpha} \bigg]ds,
\end{equation}
where $B(\alpha)$ has the same properties as in Caputo and Fabrizio case, and is defined as 
\[
B(\alpha)=1-\alpha +\frac{\alpha }{\Gamma (\alpha )},
\]
$E_{\alpha, \beta}(\lambda z^{\alpha})$ is the Mittag-Leffler function, defined in terms of a series as the following entire function
\begin{equation}\label{def:mittag}
E_{\alpha,\beta}(z)=\sum_{k=0}^{\infty} \frac{\big(\lambda z^{\alpha}\big)^{k}}{\Gamma(\alpha k + \beta)}, \quad \alpha > 0, \quad \lambda < 
\infty,~~\text{and}~~
\quad \beta>0,
\end{equation}
$\lambda = -\alpha(1-\alpha)^{-1}$.
\end{definition}
The Mittag-Leffler functions appear in the solution of linear and nonlinear fractional differential equations \cite{MR1658022}. The above definition is 
very helpful to discuss real world problems and will also have a great advantage when using the Laplace transform with initial condition. Now let us recall
the definition of the Atangana-Baleanu fractional derivative in the Riemann-Liouville sense.\medskip

\begin{definition}\label{Atangana-Baleanu_Riemann}
Let $\alpha \in (0,1)$ and a function $u \in H^{1}(a,b), b > a$. The Atangana-Baleanu fractional derivative in the Riemann-Liouville sense of order $\alpha$
of $u$ is defined as
\begin{equation}\label{Atangana-Baleanu_Riemann:definition}
^{\textsc{ABR}}\D_t^{\alpha}u\,(t)= \frac{B(\alpha)}{1 - \alpha}\frac{d}{dt}\int_{a}^{t}u(s) E_{\alpha}\bigg[-\frac{\alpha}{1-\alpha}\big(t - 
s\big)^{\alpha} \bigg]ds.
\end{equation}
Notice that, when the function $u$ is constant, we get zero.
\end{definition}
\begin{definition}\label{Atangana-Baleanu_Integral:definition}
The Atangana-Baleanu fractional integral of order $\alpha$ with base point $a$ is defined as
\begin{equation}\label{Atangana-Baleanu_Integral:equation}
^{\textsc{AB}}\I_t^{\alpha} u(t) = \frac{1-\alpha}{B(\alpha)}u(t) + \frac{\alpha}{B(\alpha) \Gamma(\alpha)} \int_{a}^{t} u(s) (t-s)^{\alpha-1} ds.
\end{equation}
\end{definition}

\section{Numerical approach of the Atangana-Baleanu integral with fractional order}\label{Sec:3}
In this section, we will start by given the discretization for Riemmann-Liouville fractional integral \cite{Hao, Khalil}. Next, following the same idea as \cite{Zhuang, Gallegos}, we introduce a numerical scheme to discretize the temporal fractional integral, and give the corresponding error analysis which will be used to give the numerical solution of the Atangana-Baleanu fractional integral and also to obtain the solution of the modified groundwater flow within a leaky aquifer.\medskip

Let $f \in \mathcal{C}^{2}(a,b)$. Then $\int_{a}^{b}f(x)dx$ could be discretized as follows as
\begin{multline}\label{descrete1}
\int_{a_{0} = a}^{a_{n}=b}f(x)dx = \sum_{j = 0}^{n} \int_{t_{j}}^{t_{j+1}}\frac{f(t_{j+1}) + f(t_{j})}{2}dy
= \sum_{j = 0}^{n} \frac{f(t_{j+1}) + f(t_{j})}{2}\int_{t_{j}}^{t_{j+1}}dy\\
= \sum_{j = 0}^{n} \frac{f(t_{j+1}) + f(t_{j})}{2}\big(t_{j+1} - t_{j}\big).
\end{multline}
On the other hand 
\begin{equation}\label{descrete2}
\int_{a}^{b}f(x)dx = \sum_{j=0}^{n}\int_{t_{j}}^{t_{j+1}} f(t_{j+1})dy = \sum_{j=0}^{n}f(t_{j+1})\big(t_{j+1}-t_{j}\big).
\end{equation}
With the Riemann-Liouville fractional integral we have the following.\medskip

We choose $t\in [0,T]$, the fractional order is denoted by $\alpha \in (0,1)$, the step $\tau = \frac{T}{n}$, $(n\in \mathbb{N})$, and the grid points are $t_{k}$,
$k\in \{0,1,2,\dots, n\}$, where $t_{k} = k\tau$. Thus for $k\in \{0,1,2,\dots, n\}$, we have
\begin{equation}\label{descrete_Riemman}
_{0}^{\textsc{RL}}\I_t^{\alpha}f(t_{k}) = \frac{1}{\Gamma(\alpha)} \sum_{i=0}^{k-1}\int_{t_{i}}^{t_{i+1}}f(y)\big(t_{k} - y \big)^{\alpha-1}dy.
\end{equation}
If we consider the following discretization for the function $f$.
\begin{equation}\label{descrete3}
f(x) = \frac{f(x_{i+1}) + f(x_{i})}{2},
\end{equation}
by replacing \eqref{descrete3} into~\eqref{descrete_Riemman} we obtain
\begin{multline}\label{descrete4}
_{0}^{}\I_t^{\alpha} = \frac{1}{\Gamma(\alpha)} \sum_{j=0}^{k-1}\int_{t_{j}}^{t_{j+1}}\frac{f(t_{j+1}) + f(t_{j})}{2}\big(t_{k} - s\big)^{\alpha-1}ds,  \\
= \frac{\tau^{\alpha}}{\Gamma(\alpha+1)}\sum_{j=0}^{k-1}\frac{f(t_{j+1}) + f(t_{j})}{2}\bigg[(k - j)^{\alpha} - (k-j+1)^{\alpha} \bigg] + R_{k, \alpha}.
\end{multline}
Using the same approach as in \cite{Wang}, the error is given as
\begin{multline}\label{error}
R_{k,\alpha} = \frac{1}{\Gamma(\alpha)}\sum_{j=0}^{k-1}\int_{t_{j}}^{t_{j+1}}\frac{f(y) - f(t_{j+1}) - f(t_{j})}{(t_{k}-y)^{1-\alpha}}  
= \frac{1}{\Gamma(\alpha)}\sum_{j=0}^{k-1}\frac{f(y) - \frac{f(t_{j+1}) - f(t_{j})(t_{j+1}-t_{j})}{(t_{j+1}-t_{j})}}{(t_{k}-y)^{1-\alpha}} \\
= \frac{1}{\Gamma(\alpha)}\sum_{j=0}^{k-1}\frac{(t_{j+1}-t_{j})}{(t_{k}-y)^{1-\alpha}}\big( f(y) - f'(\xi) \big)dy, \quad t_{j} < \xi < t_{j+1} .
\end{multline}
{}From the Taylor series at the point $\xi$ 
\[
f(y) = f(\xi) + yf'(\xi) + \cdots
\]
we can approximate
\begin{equation*}
f(y) -f'(\xi) \approx f(\xi) +  yf'(\xi) - f'(\xi) =f(\xi) - f'(\xi)(y-1).
\end{equation*}
Now taking the norm, we get
\[
\norm{f(y) - f'(\xi)} \approx  \norm{f(\xi) - f'(\xi)(y-1)} \leq M,
\]
since the function $f$ is differentiable.\medskip
Hence 
\begin{equation}
\norm{R_{k,\alpha}} = \frac{\tau}{\Gamma(\alpha+1)}M t_{k}^{\alpha}.
\end{equation}
\begin{lemma}
Let $f \in \mathcal{C}^{2}[0,T]$. The Atangana-Baleanu fractional integral is given by
\begin{align}\label{descrete5}
_{0}^{\textsc{AB}}\I_t^{\alpha}\big(f(t_{k})\big) =  \frac{1-\alpha}{B(\alpha)}f(t_{k}) +  \frac{\alpha \tau^{\alpha}}{B(\alpha)\Gamma(\alpha + 1)}
\sum_{j=0}^{k-1}b_{j}^{\alpha}\frac{f(t_{k-j}) + f(t_{k-j+1})}{2} + R_{k,\alpha}, 
\end{align}
where $\vert R_{k,\alpha} \vert \leq K t_{k}^{\alpha}\tau$,  $k = 1,2,3,\dots, n$, and $b_{j}^{\alpha} = (j+1)^{\alpha} - j^{\alpha}$, $j = 0,1,2,\dots, n$.
\end{lemma}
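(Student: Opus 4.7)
The plan is to read off the result by combining Definition \ref{Atangana-Baleanu_Integral:definition} with the discretization of the Riemann--Liouville integral already established in \eqref{descrete4}, and then to propagate the truncation error estimate given in \eqref{error} through the multiplicative constant $\alpha/B(\alpha)$.

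First I would split the AB integral using its definition:
\[
{}_{0}^{\textsc{AB}}\I_t^{\alpha} f(t_{k})
 \;=\; \frac{1-\alpha}{B(\alpha)}\,f(t_{k})
 \;+\; \frac{\alpha}{B(\alpha)}\cdot\frac{1}{\Gamma(\alpha)}\int_{0}^{t_{k}} f(s)\,(t_{k}-s)^{\alpha-1}\,ds .
\]
The first summand matches the leading term of \eqref{descrete5} exactly, so no work is needed there. For the second summand, the inner integral is precisely ${}_{0}^{\textsc{RL}}\I_t^{\alpha}f(t_{k})$ and so the trapezoidal discretization \eqref{descrete4} applies. Multiplying \eqref{descrete4} by $\alpha/B(\alpha)$ produces the prefactor $\alpha\tau^{\alpha}/(B(\alpha)\Gamma(\alpha+1))$ appearing in the lemma, together with a weighted sum of the averages $(f(t_{j+1})+f(t_{j}))/2$.

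The next step is a book-keeping reindexing that turns the telescoping weights $(k-j)^{\alpha}-(k-j-1)^{\alpha}$ of \eqref{descrete4} into the compact coefficients $b_{j}^{\alpha}=(j+1)^{\alpha}-j^{\alpha}$ of \eqref{descrete5}. Setting $\ell=k-j-1$, so that $j=k-\ell-1$ and $j+1=k-\ell$, the sum from $j=0$ to $k-1$ reverses to $\ell=0,\dots,k-1$, the weight becomes $(\ell+1)^{\alpha}-\ell^{\alpha}=b_{\ell}^{\alpha}$, and the averaged nodal values become $(f(t_{k-\ell})+f(t_{k-\ell-1}))/2$, which (after relabeling $\ell\to j$) is the form stated in the lemma modulo an index shift in the subscripts.

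For the error term, I would use what was already computed in \eqref{error}. Multiplying the bound $\|R_{k,\alpha}\|\le \tau M t_{k}^{\alpha}/\Gamma(\alpha+1)$ by $\alpha/B(\alpha)$ gives
\[
\bigl|R_{k,\alpha}\bigr|\;\le\;\frac{\alpha M}{B(\alpha)\,\Gamma(\alpha+1)}\,t_{k}^{\alpha}\,\tau \;=\; K\,t_{k}^{\alpha}\,\tau,
\]
with $K:=\alpha M/(B(\alpha)\Gamma(\alpha+1))$ independent of $k$ and $\tau$; the constant $M$ exists by the hypothesis $f\in\mathcal{C}^{2}[0,T]$ (which ensures boundedness of $f$ and $f'$ on $[0,T]$). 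The only real obstacle is the reindexing/bookkeeping step above and the verification that the error inherited from the Riemann--Liouville trapezoidal rule is of the claimed order $O(t_{k}^{\alpha}\tau)$ uniformly in $k$; everything else is substitution into the definition.
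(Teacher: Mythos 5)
Your proposal is correct and follows essentially the same route as the paper: the lemma is obtained exactly as you describe, by inserting the trapezoidal discretization \eqref{descrete4} of the Riemann--Liouville integral into Definition \ref{Atangana-Baleanu_Integral:definition}, rescaling by $\alpha/B(\alpha)$, reindexing the weights into $b_{j}^{\alpha}$, and carrying the error bound \eqref{error} through. You also rightly read the weights in \eqref{descrete4} as $(k-j)^{\alpha}-(k-j-1)^{\alpha}$, silently correcting the sign typo there, and you correctly flag the residual index shift between your reindexed sum and the subscripts as printed in the lemma.
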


\medskip
Using the second approach of the discretization \eqref{descrete2}, we obtain the following approximation
\begin{equation}\label{descrete6}
_{0}^{\textsc{AB}}\I_t^{\alpha}\big(f(t_{k})\big) =  \frac{1-\alpha}{B(\alpha)}f(t_{k}) +  \frac{\alpha \tau^{\alpha}}{B(\alpha)\Gamma(\alpha + 1)}
\sum_{j=0}^{k-1}y(t_{j+1})\big[(k-j)^{\alpha} - (k-j-1)^{\alpha} \big] +  \tilde{R}_{k,\alpha},
\end{equation}
where 
\[
\tilde{R}_{k,\alpha} = \sum_{j = 0}^{k-1}\int_{t_{j}}^{t_{j+1}}\frac{f(y) + f(t_{j+1})}{(t_{k}-y)^{1-\alpha}}dy, 
\]
and 
\[
\vert \tilde{R}_{k,\alpha} \vert \leq \frac{\tau}{\Gamma(\alpha)}t_{k}^{\alpha}\max_{0 \leq t \leq t_{k}} \vert f'(t) \vert.
\]

If the function $f$ is differentiable such that its Atangana-Baleanu fractional integral exists then 
\begin{equation}
\frac{d}{dt} ~ _{0}^{\textsc{AB}}\I_t^{\alpha}\big(f(t)\big) =  \frac{1-\alpha}{B(\alpha)}f'(t) + \frac{\alpha}{B(\alpha)\Gamma(\alpha)}\frac{d}{dt}
\int_{0}^{t}f(y)(t-y)^{\alpha-1}dy.
\end{equation}
Then for $k=0,1,2,3,\dots, n$ the Atangana-Baleanu integral can be decomposed as follows
\begin{multline}
 _{0}^{\textsc{AB}}\I_t^{\alpha}\big(f(t)\big) \\ =  \frac{1-\alpha}{B(\alpha)} \big(f(t_{k} - f(t_{k-1}) \big) + \Psi(\alpha) \int_{0}^{t_{k}}
 \frac{f(s)}{(t_{k} - y)^{1-\alpha}}ds  
- \Psi(\alpha)\int_{0}^{t_{k-1}}\frac{f(y)}{(t_{k-1}-y)^{\alpha-1}}dy  \\
= \frac{1-\alpha}{B(\alpha)} \big(f(t_{k} - f(t_{k-1}) \big) + \Psi(\alpha) \int_{0}^{\tau}\frac{f(y)}{(t_{k} - y)^{1-\alpha}}dy  
+ \Psi(\alpha)\int_{0}^{t_{k-1}}\frac{f(y+\tau) - f(y)}{(t_{k-1} - y)^{1-\alpha}}dy  \\
 = \Psi(\alpha) \int_{0}^{\tau}\frac{f(y)}{(t_{k} - y)^{1-\alpha}}dy  + ~ _{0}^{\textsc{AB}}\I_t^{\alpha}\big(\beta(t_{k-1})\big),  
\end{multline}
where $\Psi(\alpha)=\frac{\alpha}{B(\alpha)\Gamma(\alpha)}$. Nevertheless
\begin{align*}
 \int_{0}^{\tau}\frac{f(y)}{(t_{k} - y)^{1-\alpha}}dy  =  \int_{0}^{\tau}\frac{f(y)}{(t_{k} - y)^{1-\alpha}}dy  +  \int_{0}^{\tau}\frac{f(y) -
 f(\tau)}{(t_{k} - y)^{1-\alpha}}ds, 
\end{align*}
where also
\begin{align*}
\int_{0}^{\tau}\frac{f(y) - f(\tau)}{(t_{k} - y)^{1-\alpha}}ds =  \int_{0}^{\tau}\frac{f'(\xi)(y-\tau)}{(t_{k} - y)^{1-\alpha}}ds, \qquad y < \xi < \tau.
\end{align*}
Let us now assume that $f$ is two times differentiable on $[0,T]$. Then, we obtain
\[
\vert \Psi(\alpha)\int_{0}^{\tau}\frac{f(y) - f(\tau)}{(t_{k} - y)^{1-\alpha}}dy \vert \leq \frac{\tau^{1+\alpha}}{\frac{\alpha}{B(\alpha)
\Gamma(\alpha)}}b^{\alpha}_{k-1} \max_{0\leq t \leq \tau} \vert f'(t) \vert.
\]
With the above relation in hand, we can conclude that 
\begin{align*}
_{0}^{\textsc{AB}}\I_t^{\alpha}\big(\beta(t_{k-1})\big) &= \frac{1-\alpha}{B(\alpha)} \beta(t_{k-1}) + \frac{\tau^{\alpha}}{B(\alpha)\Gamma(\alpha)}
\sum_{j=1}^{k-1}b_{j-1}^{\alpha}\beta(t_{k-j}) + R'_{k,\alpha}.
\end{align*}
Here we have 
\[
\vert  R'_{k,\alpha} \vert \leq \frac{\tau^{\alpha}}{B(\alpha)\Gamma(\alpha)} t_{k-1}^{\alpha} \max_{0 \leq t \leq t_{k}} \vert f''(\xi) \vert, \qquad 0
\leq \xi < t_{k}.
\]
Therefore, we have the following relationship
\begin{equation}
_{0}^{\textsc{AB}}\I_\Delta t^{\alpha}~f(t_{k-1}) = \frac{1-\alpha}{B(\alpha)} \beta(t_{k-1}) + \frac{\tau^{\alpha}}{B(\alpha)\Gamma(\alpha)} \bigg\{
b_{k-1}^{\alpha}f(\tau)
 + \sum_{j=1}^{k-1}b_{j-1}^{\alpha}\big[f(t_{k-j + 1}) - f(t_{k-j}) \big] \bigg\}+ R^{2}_{k,\alpha}, 
\end{equation}
where $\vert  R^{2}_{k,\alpha} \vert < c_{\alpha}b_{k-1}^{\alpha}\tau^{\alpha+1} + c_{\alpha}\tau^{2} t^{\alpha}_{k-1}$ \medskip

Finally the equation above can be reformulated as
\begin{equation}
_{0}^{\textsc{AB}}\I_\Delta t^{\alpha}~f(t_{k-1}) = \frac{1-\alpha}{B(\alpha)}\frac{(f(t_{k} - t_{k-1})}{\tau} + \frac{\tau^{\alpha}}{B(\alpha)\Gamma(
\alpha)}\big\{ f(t_{k})
 + \sum_{j=1}^{k-1}b_{j}^{k}- \big(b_{j-1}^{\alpha} \big)f(t_{k-j}) \big\} + R^{2}_{k,\alpha}, 
\end{equation}
where indeed $\vert  R^{2}_{k,\alpha} \vert \leq c_{\alpha}b_{k-1}^{\alpha}\tau^{\alpha+1}$.

\section{Application to groundwater flow within Leaky aquifer based upon Atangana-Baleanu fractional derivative}\label{Sec:4}
The concept of groundwater flow within the geological formation is a very complex physical problem and has attracted the attention of several scholars from different branches of sciences and technology. In particular the model portraying the movement of this subsurface water within the medium called leaky aquifer.  In this section, we focus our attention on the model based on differentiation with non-local and non-singular kernel. To do this we consider the time derivative to be the time fractional derivative based on the Mittag-Leffler function. The new model will be analysed analytically and numerically. For analytical investigation, we shall focus on the analysis of existence and uniqueness of the solution of the new model. Then we apply the new numerical scheme to derive the numerical solution of the new model.

\subsection{Analytical solution of the flow within the leaky aquifer based upon Atangana-Baleanu fractional derivative}
Let $\Omega = (a,b)$ be an open and bounded subset of $\mathbb{R}^{n}~(n \geq 1)$, with boundary $\partial \Omega$. For a given $\alpha \in (0,1)$, and a function $\varphi(r,t) \in H^{1}(\Omega) \times [0,T]$, which represents the head, we seek $\varphi$ such that the flow of water within the leaky aquifer is governed by
\begin{equation}\label{problem_Atangana-Baleanu}
\beta^{2}~{}^{ABC}\D_{t}^{\alpha}~\varphi = \partial_{rr}\varphi + \frac{1}{r} \partial_{r}\varphi - \frac{\varphi}{\varpi^{2}},
\end{equation}
where $\varpi = KDc$ and $\beta^{2} = Sc\varpi^{-2}$, $S$ denotes the coefficient of storage, $K$ denotes the conductivity.

\medskip
The problem~\eqref{problem_Atangana-Baleanu} of groundwater flow within the leaky aquifer will be approached analytically and solved numerically using the implicit scheme.

\subsection{Existence and uniqueness of the solution of the problem}

In the following, we discuss the existence and uniqueness of solutions of the direct problem. 
\begin{equation}\label{direct_Atangana_Baleanu_leaky}
\begin{cases}
~{}^{ABC}\D_{t}^{\alpha}~\varphi(t,r) = g(t,\varphi(t,r)),\\
\varphi(r_{c}, 0) = h(r) \qquad \qquad \text{on}  \qquad [0,T]\times \partial \Omega,\\
\varphi(r_{c}, t) = \varphi_{c}\qquad \qquad \text{in}~~ \{0\}~\times 
~\Omega,\\
\end{cases}
\end{equation}
for the initial datum $\varphi_{c} \in L^{\infty}(\Omega), \varphi_{c} > 0$, and where 
\begin{equation}\label{function_g}
g\big(r,t, \varphi(r,t)\big) = \beta^{-2} \bigg( \partial_{rr}\varphi(t,r) + \frac{1}{r} \partial_{r}\varphi(t,r) - \frac{\varphi(t,r)}{\varpi^{2}} \bigg).
\end{equation}

Given $\varphi_{c} \in L^{\infty}(\Omega),~ \varphi_{c} > 0$, a solution of \eqref{direct_Atangana_Baleanu_leaky} is a positive function $\varphi \in H^{1}(\Omega) \times [0,T]$ such that if we apply the fractional integral defined in \eqref{Atangana-Baleanu_Integral:equation} to \eqref{direct_Atangana_Baleanu_leaky} it yields
\begin{equation}\label{solution_1}
\varphi(t,r) = \varphi(0) + \frac{1-\alpha}{B(\alpha)}g(t,\varphi(t,r)) + \frac{\alpha}{B(\alpha) \Gamma(\alpha)} \int_{0}^{t} g(s,\varphi(s,r)) (t-s)^{\alpha-1} ds,
\end{equation}
for all $t \in [0,T]$ \medskip

We want to use the contraction mapping theorem, so for this purpose we need to build a closed set $\mathcal{\varepsilon}$ of $H^{1}(\Omega) \times [0,T]$ such that the nonlinear operator $g$ be a contraction which maps $\mathcal{\varepsilon}$ into itself. \medskip

So we first show that $g$ is a contraction mapping.

\begin{proposition}
The nonlinear operator $g \in H^{1}(\Omega) \times [0,T]$ is locally Lipschitz.
\end{proposition}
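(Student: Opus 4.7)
The plan is to exploit the fact that $g(r,t,\varphi)=\beta^{-2}\big(\partial_{rr}\varphi+\tfrac{1}{r}\partial_r\varphi-\tfrac{\varphi}{\varpi^{2}}\big)$ is \emph{linear} in its third argument, so that $g(\cdot,\varphi_1)-g(\cdot,\varphi_2)=g(\cdot,\varphi_1-\varphi_2)$ and the local Lipschitz estimate reduces to bounding $g$ acting on the difference $w=\varphi_1-\varphi_2$ in a suitable norm on a bounded subset of $H^{1}(\Omega)\times[0,T]$.

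First I would fix $M>0$, set $\mathcal{B}_M=\{\varphi:\norm{\varphi}\le M\}$ inside the chosen working space, and for $\varphi_1,\varphi_2\in\mathcal{B}_M$ write
\[
\norm{g(\cdot,\varphi_1)-g(\cdot,\varphi_2)}\le \beta^{-2}\Big(\norm{\partial_{rr}w}+\norm{\tfrac{1}{r}\partial_r w}+\tfrac{1}{\varpi^{2}}\norm{w}\Big)
\]
by the triangle inequality. The zero-order term already contributes a Lipschitz constant $\beta^{-2}\varpi^{-2}$ directly. For the first-order term, since $\Omega=(a,b)$ with the physical radial coordinate bounded away from zero ($a>0$), one has $\tfrac{1}{r}\le\tfrac{1}{a}$ on $\Omega$, hence $\norm{\tfrac{1}{r}\partial_r w}\le\tfrac{1}{a}\norm{w}_{H^{1}}$. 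Both of these terms are therefore controlled by the $H^{1}$ norm uniformly in $t\in[0,T]$.

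The main obstacle is the second-order term $\norm{\partial_{rr}w}$, which is \emph{not} bounded by the $H^{1}$ norm. I would resolve this by strengthening the working space, taking the ball $\mathcal{B}_M$ inside $H^{2}(\Omega)\cap H^{1}(\Omega)$ (with an $L^{\infty}$-in-time dependence) so that $\norm{\partial_{rr}w}_{L^{2}}\le\norm{w}_{H^{2}}\le 2M$. Because $g$ is linear in $\varphi$, this step is really equivalent to the observation that $g$ is a bounded linear map $H^{2}\to L^{2}$, uniformly in $t\in[0,T]$. Combining the three bounds yields a single constant
\[
L_M=\beta^{-2}\Big(C_M+\tfrac{1}{a}+\tfrac{1}{\varpi^{2}}\Big)
\]
such that $\norm{g(\cdot,\varphi_1)-g(\cdot,\varphi_2)}\le L_M\,\norm{\varphi_1-\varphi_2}$ for all $\varphi_1,\varphi_2\in\mathcal{B}_M$, which is exactly local Lipschitz continuity.

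The delicate point here is not the algebra but the choice of functional framework: the norm has to be strong enough to bound $\partial_{rr}$, yet compatible with the subsequent fixed-point argument applied to \eqref{solution_1}, where $L_M$ must interact with the coefficients $\tfrac{1-\alpha}{B(\alpha)}$ and $\tfrac{\alpha}{B(\alpha)\Gamma(\alpha)}$ so as to produce a genuine contraction on the closed subset of $H^{1}(\Omega)\times[0,T]$ the authors intend to use. Once that functional framework is fixed, the remainder is just bookkeeping of the one-dimensional Sobolev embeddings on the bounded interval $(a,b)$.
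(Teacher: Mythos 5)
Your decomposition is the same one the paper uses --- linearity of $g$ in $\varphi$, the triangle inequality, and a term-by-term estimate --- but you have correctly identified the step at which the paper's own proof fails, and the paper does not repair it. The published argument simply asserts that ``the derivative operator satisfies the Lipschitz conditions in $H^{1}$'' and produces constants $c_{1},c_{2}$ with $\norm{\partial_{rr}\varphi-\partial_{rr}\vartheta}\leq c_{1}\norm{\varphi-\vartheta}$; this is false, since $\partial_{rr}$ is unbounded on $H^{1}(\Omega)$ (the paper's display also retains a minus sign in front of the $\varpi^{-2}$ term after ``applying'' the triangle inequality, and concludes with $C<1$, i.e.\ contractivity, which neither follows from the estimate nor is what the proposition claims). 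Your remedy --- measure the argument of $g$ in $H^{2}$ so that $g$ becomes a bounded, hence Lipschitz, linear map from $H^{2}$ to $L^{2}$, and use $a>0$ to control $r^{-1}$ --- is the standard and correct one. Two caveats. First, since $g$ is linear you do not need the ball $\mathcal{B}_{M}$ at all, and the relevant bound is $\norm{\partial_{rr}w}_{L^{2}}\leq\norm{w}_{H^{2}}$ with constant $1$; the inequality $\norm{w}_{H^{2}}\leq 2M$ bounds the image of the ball rather than the Lipschitz constant, although for a linear map boundedness and Lipschitz continuity coincide, so no harm is done. Second, and more seriously, your fix changes the statement being proved: the proposition and the subsequent fixed-point argument of Theorem \ref{theo1} are set in $H^{1}(\Omega)\times[0,T]$, and replacing the $H^{1}$ norm by the $H^{2}$ norm in the domain must be propagated through the definition of the set $\varepsilon$ and the contraction estimate there (one must then also check that $\psi$ maps an $H^{2}$ ball into itself, which requires more regularity of $g(\varphi)$ than $L^{2}$). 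So as a proof of the proposition as literally stated your argument does not close the gap either --- but it is the honest version, whereas the paper's proof conceals the gap inside an unjustified assertion.
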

\begin{proof}
We consider two bounded functions $\varphi$ and $\vartheta$ in $\in H^{1}(\Omega) \times [0,T]$. Then,
\begin{align*}
\norm{g(\varphi) - g(\vartheta)}_{H^{1}} = \beta^{-2} \norm{ \partial_{rr}\varphi -  \partial_{rr}\vartheta + r^{-1} \big( \partial_{r}\varphi-  \partial_{r}\vartheta  \big) - \varpi^{-2}\big( \varphi- \vartheta \big)}.
\end{align*}
We have by the triangular inequality
\begin{align*}
\norm{g(\varphi) - g(\vartheta)}_{H^{1}} \leq  \beta^{-2} \bigg \{ \norm{ \partial_{rr}\varphi -  \partial_{rr}\vartheta } + \norm{ r^{-1} \big( \partial_{r}\varphi-  \partial_{r}\vartheta  \big)} - \varpi^{-2}\norm{ \varphi- \vartheta \big)} \bigg \}.
\end{align*}
As the derivative operator satisfy the Lipschitz conditions in $H^{1}$, hence there exist two positive constants $c_{1}$ and $c_{2}$ such that
\begin{multline*}
\norm{g(\varphi) - g(\vartheta)}_{H^{1}} \leq  \beta^{-2}c_{1} \norm{ \varphi - \vartheta } + \beta^{-2}c_{2} \norm{r^{-1}}\norm{\varphi- \vartheta} - \beta^{-2}\varpi^{-2}\norm{ \varphi- \vartheta },\\
\leq C \norm {\varphi- \vartheta },
\end{multline*}
where $C = |\beta^{-2} (c_{1} + c_{2}\norm{r^{-1}} - \varpi^{-2})| < 1$. Then $g$ is a contraction mapping.
\end{proof}

\begin{theorem}\label{theo1}
For a given initial datum $\varphi_{c} \in L^{\infty}(\Omega),~ \varphi_{c} > 0$, there exists an unique positive solution $\varphi$ of \eqref{direct_Atangana_Baleanu_leaky} on $ H^{1}(\Omega) \times [0,T]$, for all $t < T < \infty$, and 
\[
\lim_{t \to T} \norm{\varphi(t,r)}_{L^{\infty}} \to \infty.
\]
\end{theorem}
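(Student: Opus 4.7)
The plan is to recast \eqref{direct_Atangana_Baleanu_leaky} through the Atangana--Baleanu integral \eqref{Atangana-Baleanu_Integral:equation} as the fixed-point equation \eqref{solution_1}, and to apply Banach's contraction principle on a closed ball in $X_{T_0}:=\CC([0,T_0];H^{1}(\Omega))$ for a sufficiently small time horizon $T_0>0$. The local Lipschitz estimate for $g$ established in the preceding proposition, with contraction constant $C<1$, is the key analytic input. Concretely, I would introduce the operator $\mathcal{F}:X_{T_0}\to X_{T_0}$ defined by
\[
(\mathcal{F}\varphi)(t,r)=\varphi_{c}(r)+\frac{1-\alpha}{B(\alpha)}\,g(t,\varphi(t,r))+\frac{\alpha}{B(\alpha)\Gamma(\alpha)}\int_{0}^{t}g(s,\varphi(s,r))(t-s)^{\alpha-1}\,ds,
\]
whose fixed points coincide with the mild solutions \eqref{solution_1}. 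On the closed ball $\mathcal{E}_R:=\{\varphi\in X_{T_0}:\norm{\varphi-\varphi_{c}}_{X_{T_0}}\le R\}$, which is complete in the induced metric, we then seek a unique fixed point.

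The self-mapping property is verified by combining the bound $\norm{g(\varphi)}_{H^{1}}\le M+CR$ on $\mathcal{E}_R$ (with $M:=\norm{g(\varphi_{c})}_{H^{1}}$) with the definition of $\mathcal{F}$, giving
\[
\norm{\mathcal{F}\varphi-\varphi_{c}}_{X_{T_0}}\le\frac{(1-\alpha)(M+CR)}{B(\alpha)}+\frac{(M+CR)\,T_{0}^{\alpha}}{B(\alpha)\Gamma(\alpha+1)},
\]
which is $\le R$ once $R$ is large and $T_0$ small. The contraction estimate, applying the Lipschitz inequality directly, reads
\[
\norm{\mathcal{F}\varphi-\mathcal{F}\vartheta}_{X_{T_0}}\le C\!\left(\frac{1-\alpha}{B(\alpha)}+\frac{T_{0}^{\alpha}}{B(\alpha)\Gamma(\alpha+1)}\right)\!\norm{\varphi-\vartheta}_{X_{T_0}}.
\]
Because $C<1$ and $B(\alpha)>1-\alpha$, the first term in the bracket is already strictly below $1$; shrinking $T_0$ further forces the whole bracket below $1$. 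Banach's fixed-point theorem then yields a unique $\varphi\in\mathcal{E}_R$ solving \eqref{solution_1}, i.e.\ a unique local solution. Positivity on $[0,T_0]$ can be enforced by taking $R<\inf_{\Omega}\varphi_{c}$, so that $\varphi$ remains close to the strictly positive datum, or alternatively via a maximum-principle argument for the parabolic-type equation \eqref{problem_Atangana-Baleanu}.

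For the global statement and blow-up alternative, I would set $T_{\max}:=\sup\{T>0:\text{a unique solution of \eqref{direct_Atangana_Baleanu_leaky} exists on }[0,T]\}$ and use the standard continuation argument: if $T_{\max}<\infty$ and $\limsup_{t\to T_{\max}}\norm{\varphi(t,\cdot)}_{L^{\infty}}<\infty$, then the a priori bound together with the local Lipschitz property of $g$ allows restarting the fixed-point scheme at $t=T_{\max}$, contradicting maximality; hence $\norm{\varphi(t,\cdot)}_{L^{\infty}}\to\infty$ as $t\to T_{\max}$. The main obstacle is really in the self-mapping and contraction estimates: the non-local memory contribution $\tfrac{1-\alpha}{B(\alpha)}g(\varphi)$ in \eqref{solution_1} does not shrink with $T_0$, so one cannot rely on small-time smallness and must lean on the global bound $C<1$ from the preceding proposition. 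The fractional integral contribution, by contrast, carries a factor $T_{0}^{\alpha}/\Gamma(\alpha+1)$ and is easily made arbitrarily small. Propagating positivity rigorously for all $t\in[0,T_{\max})$ is the remaining delicate point and will likely require a comparison principle tailored to the Atangana--Baleanu--Caputo derivative.
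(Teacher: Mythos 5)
Your strategy is the same as the paper's: rewrite \eqref{direct_Atangana_Baleanu_leaky} as the fixed-point equation \eqref{solution_1} and run Banach's contraction principle on a closed ball of $\mathcal{C}([0,T_{0}];H^{1}(\Omega))$, using the local Lipschitz bound for $g$ from the preceding proposition. In fact your execution of the contraction step is more careful than the paper's: you correctly isolate the local term $\frac{1-\alpha}{B(\alpha)}g(t,\varphi)$, observe that it contributes a factor $C(1-\alpha)/B(\alpha)$ that does \emph{not} shrink with $T_{0}$, and close the estimate using $C<1$ together with $B(\alpha)=1-\alpha+\alpha/\Gamma(\alpha)>1-\alpha$. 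The paper's displayed contraction bound $\sup_{0\le t\le T_{1}}T_{1}C_{\bar{\varphi_{c}}}\norm{\varphi_{1}-\varphi_{2}}_{\varepsilon}$ silently discards this term (and also the kernel $(t-s)^{\alpha-1}$, which you retain and which produces the correct $T_{0}^{\alpha}/\Gamma(\alpha+1)$ factor). So for bare local existence and uniqueness your argument is sound --- modulo the proposition it relies on, whose claim that $\partial_{rr}$ is Lipschitz on $H^{1}$ with a resulting constant $C<1$ is itself the weakest link of the whole section for both you and the authors.

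Two genuine gaps remain, both in the parts of the statement that go beyond local well-posedness. First, positivity: you need $R$ large enough that the self-mapping inequality $\frac{(1-\alpha)(M+CR)}{B(\alpha)}+\frac{(M+CR)T_{0}^{\alpha}}{B(\alpha)\Gamma(\alpha+1)}\le R$ holds, but you simultaneously want $R<\inf_{\Omega}\varphi_{c}$ (after passing through the embedding $H^{1}\hookrightarrow L^{\infty}$ in one dimension) to keep $\varphi$ positive; these two requirements on $R$ pull in opposite directions and nothing guarantees they are compatible, so positivity is not yet established (the maximum-principle alternative you mention would need its own argument for the Atangana--Baleanu--Caputo operator). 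Second, the continuation/blow-up alternative: the standard restarting argument does not transfer verbatim to \eqref{solution_1}, because the equation carries memory --- a solution on $[T_{\max},T_{\max}+\delta]$ remains coupled to the full history on $[0,T_{\max}]$ through the kernel $(t-s)^{\alpha-1}$, so one needs a continuation lemma for Volterra-type equations rather than the ODE version you invoke. To be fair, the paper proves neither positivity nor the blow-up dichotomy at all (its uniqueness step is a short Gronwall argument and the rest is asserted), so on these points your sketch, while incomplete, already goes further than the source.
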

\begin{proof}
We shall follow the idea of \cite{Song}. Since the nonlinear operator $g$ is locally Lipschitz, for  $\bar{\varphi_{c}} = \norm{\varphi_{c}}_{H^{1}}$
there exists $C_{\bar{\varphi_{c}}}$ such that $0 < C_{\bar{\varphi_{c}}} < \infty$ and
\[
\norm{g(\varphi) - g(\vartheta)}_{H^{1}} \leq  C_{\bar{\varphi_{c}}} \norm {\varphi- \vartheta }_{H^{1}}.
\]
Let $T_{1}>0$ be a constant such that $T_{1} < \frac{1}{C_{\bar{\varphi_{c}}}}$.

\medskip
Set 
\[
\varepsilon = \big\{ \varphi \in H^{1}(\Omega) \times [0,T];~~\norm {\varphi}_{H^{1}} \leq \bar{\varphi_{c}}, \text{for all}~ t \in [0,T_{1}] \big\},
\]
endowed with the norm 
\[
\norm{\varphi}_{\varepsilon} = \sup_{0 \leq t \leq T_{1}}\norm {\varphi}_{H^{1}},
\]
then $\varepsilon$ is a closed convex subset of $H^{1}(\Omega) \times [0,T]$.

Now we consider the following associated problem of~\eqref{solution_1} defined on $\varepsilon$,
\begin{equation}\label{solution_2}
\psi(\varphi) = \varphi_{c} + \frac{1-\alpha}{B(\alpha)}g(t,\varphi(t,r)) + \frac{\alpha}{B(\alpha) \Gamma(\alpha)} \int_{0}^{t} g(s,\varphi(s,r)) (t-s)^{\alpha-1} ds.
\end{equation}
For all $\varphi \in \varepsilon$ and $t \geq 0$ we have
\begin{align*}
& \norm{\psi(\varphi)}_{\varepsilon} = \sup_{0 \leq t \leq T_{1}} \norm{ \varphi_{c} + \frac{1-\alpha}{B(\alpha)}g(t,\varphi(t,r)) + \Psi(\alpha) \int_{0}^{t} g(s,\varphi(s,r)) (t-s)^{\alpha-1} ds}_{H^{1}}\\
&\leq \norm{\varphi_{c}}_{H^{1}} + \norm{ \frac{1-\alpha}{B(\alpha)}g(t,\varphi(t,r))}_{H^{1}} + \frac{\alpha}{B(\alpha) \Gamma(\alpha)} \int_{0}^{t} \norm{ g(s,\varphi(s,r))}_{H^{1}}ds\\
&\leq \norm{\varphi_{c}}_{H^{1}} + \int_{0}^{t} \norm{ g(s,\varphi(s,r))}_{H^{1}}ds.
\end{align*}
But since $g$ is locally Lipschitz for all $s \in [0,T_{1}]$, it follows that
\begin{align*}
& \norm{\psi(\varphi)}_{\varepsilon} \leq \norm{\varphi_{c}}_{H^{1}} + \int_{0}^{t} \big(C_{\bar{\varphi_{c}}} \norm{\varphi(s,r)}_{H^{1}} + c_{3} \big)~ds.\\
\end{align*}
Thus for all $\varphi_{1}$, $\varphi_{2} \in \varepsilon $
\begin{align*}
& \norm{\psi(\varphi_{1}) - \psi(\varphi_{2})}_{\varepsilon} = \sup_{0 \leq t \leq T_{1}}T_{1}C_{\bar{\varphi_{c}}} \norm{\varphi_{1} - \varphi_{2}}_{\varepsilon}.\\
\end{align*}

This shows that $\psi$ is a contraction mapping in $\varepsilon$. Thus $\psi$ has a fixed point which is a solution to~\eqref{direct_Atangana_Baleanu_leaky}.

\medskip
Now let us show that the problem \eqref{direct_Atangana_Baleanu_leaky} has an unique solution.

\medskip
Let $\varphi_{1}$, $\varphi_{2} \in H^{1}$ be two solutions of \eqref{direct_Atangana_Baleanu_leaky} and let
$\varphi = \varphi_{1} - \varphi_{2}$. Then 
\begin{equation*}
\varphi =  \frac{1-\alpha}{B(\alpha)} \bigg( g(t,\varphi_{1}(t,r))-  g(t,\varphi_{2}(t,r)) \bigg) \\ + \frac{\alpha}{B(\alpha) \Gamma(\alpha)} \int_{0}^{t} \bigg( g(s,\varphi_{1}(s,r))-  g(s,\varphi_{2}(s,r)) \bigg)ds.
\end{equation*}
By thanking the norm on both sides,
\begin{multline*}
 \norm{\varphi} \leq \frac{1-\alpha}{B(\alpha)} \norm{ g(t,\varphi_{1}(t,r))-  g(t,\varphi_{2}(t,r)) } + \frac{\alpha}{B(\alpha) \Gamma(\alpha)} \int_{0}^{t} \norm{ g(s,\varphi_{1}(s,r))-  g(s,\varphi_{2}(s,r)) }ds\\
 \leq T_{1}C_{\bar{\varphi_{0}}} \int_{0}^{t} \norm{\varphi_{1}(s,r)}_{H^{1}}~ds.
\end{multline*}
By the Gronwall inequality~\cite{Song}, the result follows.
\end{proof}

\subsection{Exact solution of the flow within the leaky aquifer based upon Atangana-Baleanu fractional derivative}
In the model here discussed for water flow in the leaky aquifer, the head $\varphi(r,t)$, which appears in \eqref{problem_Atangana-Baleanu}, is assumed 
to be governed by the one-dimensional time-fractional differential equation involving the Atangana-Baleanu fractional derivative. \medskip

Applying Laplace transform to \eqref{problem_Atangana-Baleanu}, the fundamental solution $ \tilde{\varphi(r,p)}$, $p \in \mathbb{N}$ results to be:
\begin{equation*}
 -\beta^{2} \mathcal{L}_{t} \bigg[ ~{}^{ABC}\D_{t}^{\alpha}~\varphi \bigg](p) +  \mathcal{L}_{t} \bigg[ \partial_{rr}\varphi \bigg](p) + 
 \mathcal{L}_{t} \bigg[ \frac{1}{r} \partial_{r}\varphi\bigg](p) - \varpi^{-2} \mathcal{L}_{t} \big[\varphi \big](p) = 0,
\end{equation*}
where $\mathcal{L}_{t} := \tilde{\varphi}$ denotes the Laplace transform. Replacing each term by its value, we get
\begin{equation*}
-\frac{B(\alpha)\beta^{2}\big[p^{\alpha}\varphi - p^{\alpha-1}\varphi_{c} \big]}{(1-\alpha)p^{\alpha} + \alpha} +  
\partial_{rr} \tilde{\varphi} + \frac{1}{r}\partial_{r}\tilde{\varphi} - \varpi^{-2} \tilde{\varphi}
 =  0.  
\end{equation*}
Hence the following differential equation in the form holds
\begin{equation}\label{diff_bessel_laplace}
r^{2} \partial_{rr} \tilde{\varphi} + r~\partial_{r}\tilde{\varphi} - qr^{2}\tilde{\varphi} = 0,
\end{equation}
with $\varphi_{c} \approx 0$, where
\[
q = \frac{B(\alpha)\beta^{2}p^{\alpha}}{(1-\alpha)p^{\alpha} + \alpha} + \varpi^{-2}.
\]
Since $q$ is positive, the exact solution of the differential equation~\eqref{diff_bessel_laplace} is given in terms of Bessel function of
the first kind, $J_0$ and modified kind, $K_0$ as
\begin{equation}\label{solution_laplace1}
\tilde{\varphi}(r,p) = AJ_{0}(r\sqrt{q}) + BK_0(r\sqrt{q}),
\end{equation} 
where $A$ and $B$ are the constants and $J_0$, $K_0$ respectively given as 
\[
J_{\nu}(z) = \sum_{k=0}^{\infty}\frac{(-1)^k}{k!\Gamma(k+\nu+1)}\left( \frac{z}{2}\right)^{2k+\nu}  \text{ and } K_{\nu}(z) =
\frac{\pi}{2}(-i)^{\nu}\left(\frac{-J_{\nu}(z)+J_{-\nu}(z)}{\sin(\nu \pi)}\right).
\]

Using the boundary condition in \eqref{solution_laplace1}, we obtain $B=0$, then the solution is reduced to 
 \begin{equation}\label{solution_laplace2}
\tilde{\varphi}(r,p) = AJ_{0}(r\sqrt{q}) =  A\sum_{k=0}^{\infty}\frac{(-1)^k}{k!\Gamma(k+\nu+1)}\left( \frac{r\sqrt{q}}{2}\right)^{2k+\nu}.
 \end{equation}
Due to the difficulties to obtain the inverse Laplace transform of \eqref{solution_laplace2}, we therefore propose to obtain the approximate solution 
of \eqref{direct_Atangana_Baleanu_leaky} by using the proposed numerical approximation of the Atangana-Baleanu integral.

\subsection{Numerical analysis of the ground water flow within the leaky aquifer based upon Atangana-Baleanu fractional derivative}
To achieve this, we revert the fractional differential equation to the fractional integral equation using the link between the Atangana-Baleanu derivative
and the Atangana-Baleanu integral to obtain
\begin{equation}
\varphi(r,t) - \varphi(r_{c},0) =  ~~_{0}^{AB}\I_{t}^{\alpha}~g\big(r,t, \varphi(r,t)\big).
\end{equation}
For some positive and large integers $M = N = 350$, the grid sizes in space and time is denoting respectively by $\xi = 1/M$ and $\tau = 1/N$. The grid 
points in the space interval $(0,1]$ are the numbers $r_{i} = i\xi$, $i = 1,2,\dots,M$ and the grid points in the time interval $[0,1]$ are the numbers
$t_{k} = k \tau$, $k = 0,1,2,\dots,N$. The value of the function $\varphi$ at the grid points are denoted by $\varphi_{i}^{k} = 
\varphi(r_{i},t_{k})$.\medskip

Using the implicit finite differences method, a discrete approximation of $g\big(r, t, \varphi(r,t) \big)$ given by~\eqref{function_g} can be obtained as 
follows
\begin{equation}\label{descrite_function_g}
g \big(r_{i}, t_{k}, \varphi(r_{i},t_{k})\big) = \frac{\beta^{-2}}{\xi^2} \bigg\{ \big(1 + \frac{1}{2i}\big)\varphi_{i+1}^{k} - \big(2 + 
\frac{\xi^{2}}{\varpi^{2}} \big)  \varphi_{i}^{k} + \big(1 - \frac{1}{2i}\big)\varphi_{i-1}^{k} \bigg\}.
\end{equation}
In order to use the numerical approximation proposed in this work, discrete solution of ~\eqref{direct_Atangana_Baleanu_leaky} is then given as
follows\medskip
\begin{equation}\label{solutiona}
\varphi_{i}^{k+1} - \varphi_{c}^{0} = ~{}^{AB}\I_{t}^{\alpha}~g(r_{i},t_{k}, \varphi(r_{i},t_{k+1})).
\end{equation}
 
\begin{multline}
_{0}^{\textsc{AB}}\I_{t}^{\alpha}~g = \frac{1-\alpha}{B(\alpha)}g \big(r_{i}, t_{k+1}, \varphi(r_{i},t_{k+1})\big)  \\
 + \Psi(\alpha)
\sum_{j=0}^{k}\frac{b_{j}^{\alpha}}{2} \bigg( 
g\big(r_{i},t_{k-j}, \varphi_{i}(r_{i},t_{k-j})\big) + g\big(r_{i},t_{k-j+1}, \varphi(r_{i},t_{k-j+1}\big)\bigg).
\end{multline}
Therefore the numerical approximation can be given as follows
\begin{multline}\label{solutionb}
\varphi_{i}^{k+1} - \varphi_{c}^{0} = \frac{(1-\alpha)}{B(\alpha)}\frac{\beta^{-2}}{\xi^2} \bigg\{ \big(1 + \frac{1}{2i}\big)\varphi_{i+1}^{k+1} - 
\big(2 + \frac{\xi^{2}}{\varpi^{2}} \big)  \varphi_{i}^{k+1} + \big(1 - \frac{1}{2i}\big)\varphi_{i-1}^{k+1} \bigg\} \\
 + \Psi(\alpha)\frac{\beta^{-2}}{\xi^2}
\sum_{j=0}^{k}\frac{b_{j}^{\alpha}}{2} \bigg\{ 
 \big(1 + \frac{1}{2i}\big)\varphi_{i+1}^{k-j} - \big(2 + \frac{\xi^{2}}{\varpi^{2}} \big)  \varphi_{i}^{k-j} + \big(1 - 
 \frac{1}{2i}\big)\varphi_{i-1}^{k-j}  \\
+ \big(1 + \frac{1}{2i}\big)\varphi_{i+1}^{k-j+1} - \big(2 + \frac{\xi^{2}}{\varpi^{2}} \big)  \varphi_{i}^{k-j+1} + \big(1 - 
\frac{1}{2i}\big)\varphi_{i-1}^{k-j+1} \bigg\} .
\end{multline}

In order obtain the plots of the solution given by \eqref{solutionb}, we shall consider $350$ equidistant nodes in $[0,1]$. For the value $\alpha= 1/2$, 
the approximate solution found by using the numerical method of the Atangana-Baleanu integral proposed is obtained. Figure \ref{fig:fig_a} shows the 
approximate solution for the different time step $k = 0$, $k = 20$ and $k = 50$ respectively.
\begin{figure}[ht!]
\centering 
\begin{center}
  \includegraphics[width=0.30\textwidth]{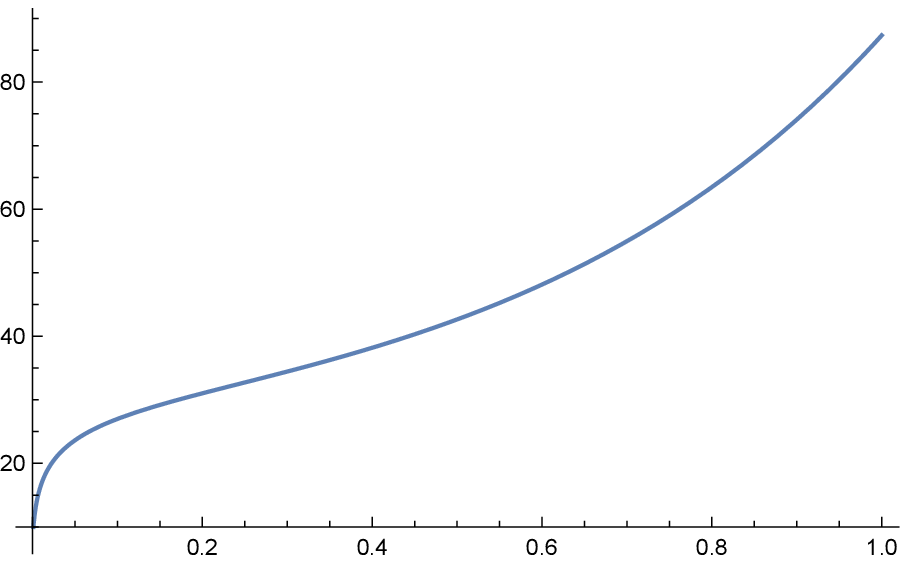} $\quad$
  \includegraphics[width=0.30\textwidth]{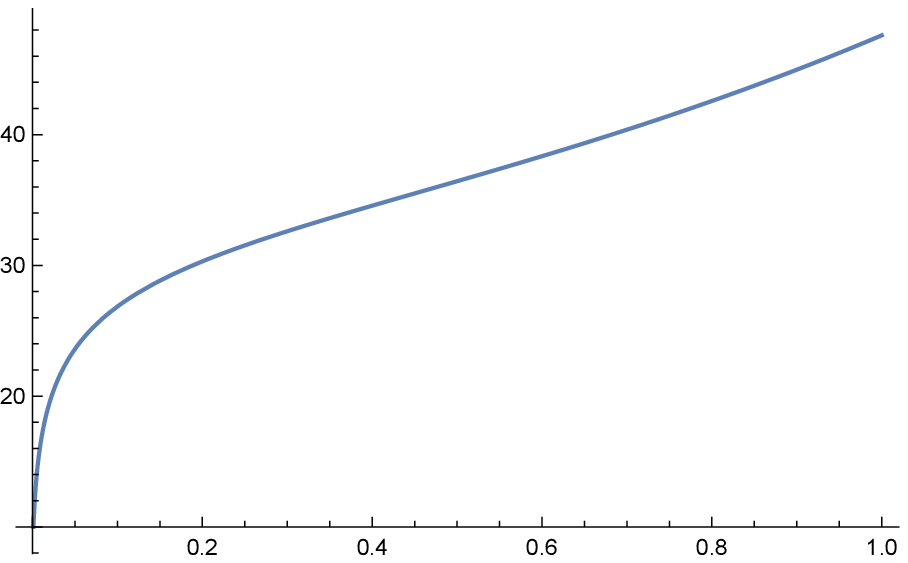} $\quad$
  \includegraphics[width=0.30\textwidth]{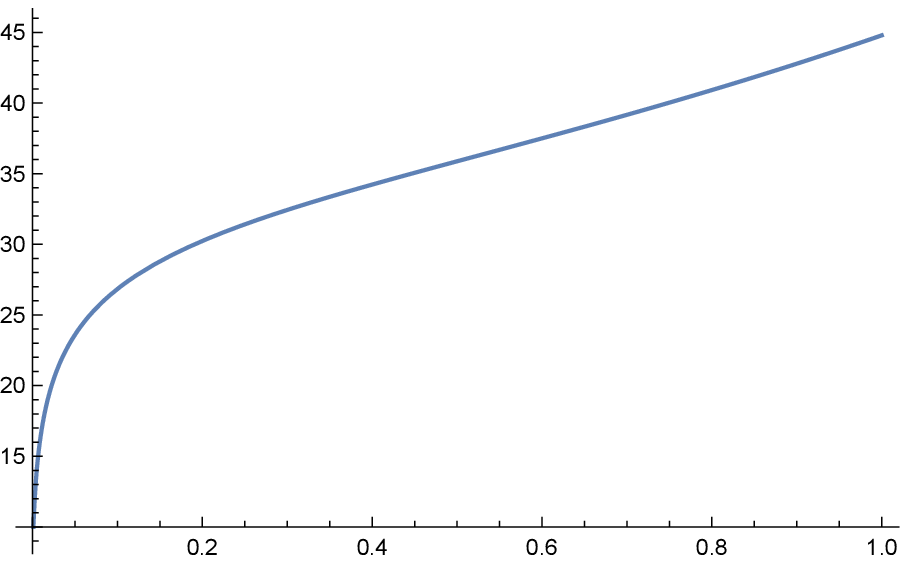}
\end{center}
\caption{Numerical solution of \eqref{solutionb} for the specific value of the parameters $\alpha=1/2$, and different time step $k = 0$, $k = 20$ and $k =
50$ respectively.}
\label{fig:fig_a}
\end{figure}

Moreover, to have an overview of the variation of flow or the behaviour of the function $\varphi$ in a finite time in terms of the parameter $\alpha$, we 
consider two different time steps $k = 0$ and $k =  50$. For this purpose, Figure \eqref{fig:fig_b} gives us the approximate result using the method
proposed. We would like to notice that for larger number of nodes in $[0,1]$ the better approximated solution is obtained in the whole interval.
\begin{figure}[ht!]
\centering 
 \includegraphics[width=0.35\textwidth]{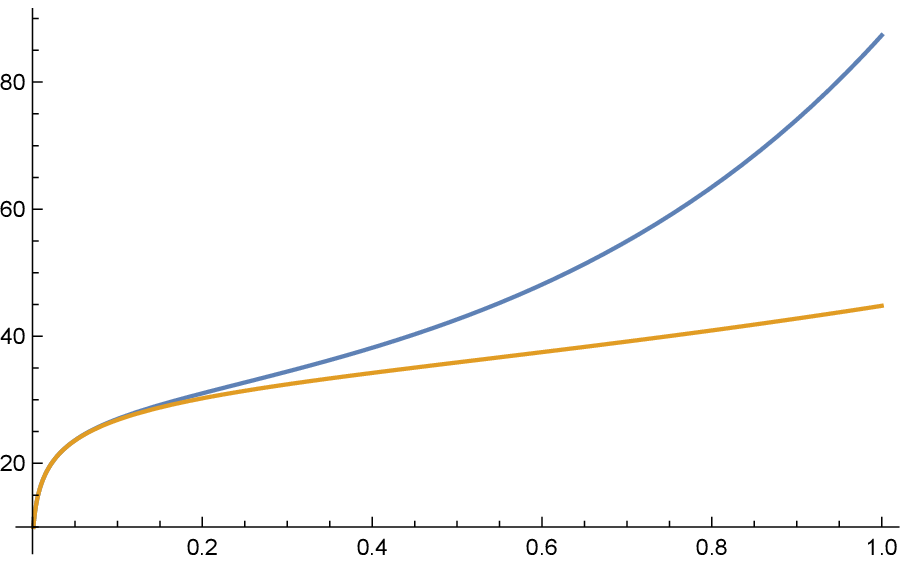} $\quad$
\includegraphics[width=0.35\textwidth]{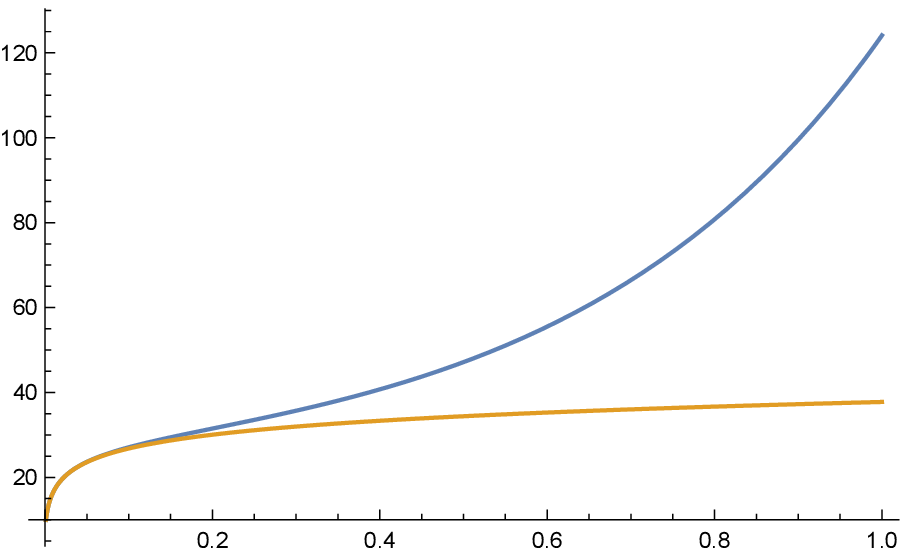}
\caption{Approximate solution of \eqref{solutionb} for initial time step $k=0$ and $\alpha = 1/2$---blue---  and $\alpha = 9/10$ ---orange--- (graph in the 
left). Approximate solution of \eqref{solutionb} for  time step $k=50$ and $\alpha = 1/2$---blue---  and $\alpha = 9/10$ ---orange--- (graph in the right)}
\label{fig:fig_b}
\end{figure}

Next in Figure \ref{fig:fig_c} we show the approximate solution of \eqref{solutionb} by using the numerical approximation of the Atangana-Baleanu method 
proposed for different time steps $k = 0$ and $k = 50$ for $\alpha = 1/2$ and $\alpha = 9/10$ in $[0,1]$.
\begin{figure}[ht!]
\centering 
 \includegraphics[width=0.35\textwidth]{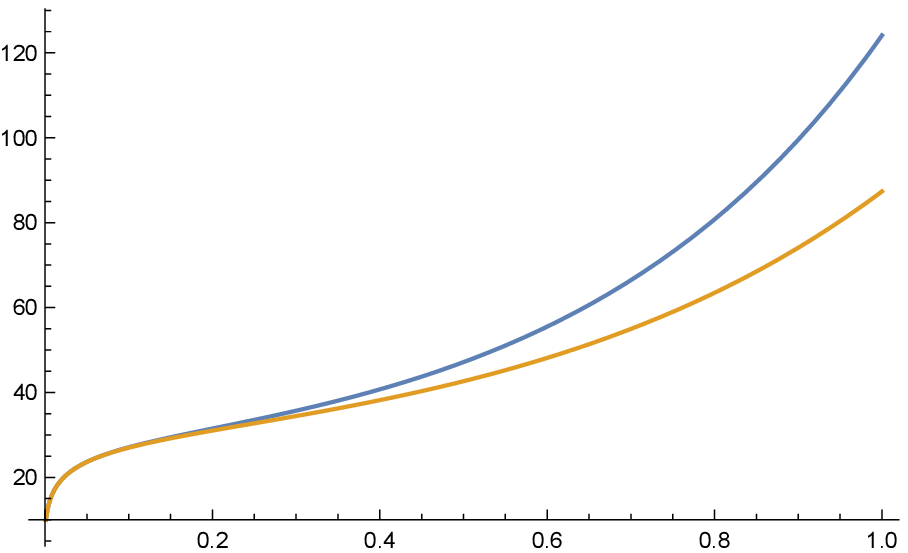} $\quad$
\includegraphics[width=0.35\textwidth]{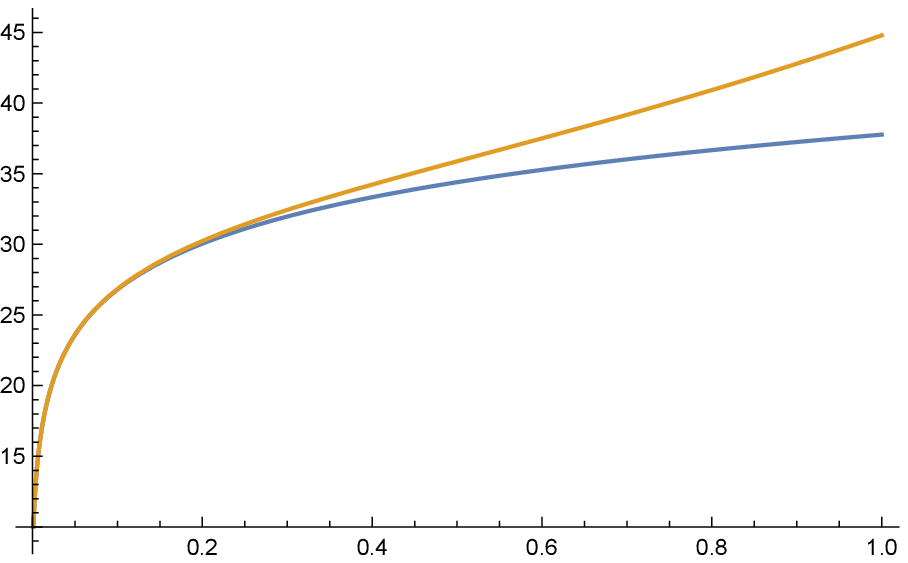}
\caption{Approximate solution of \eqref{solutionb} for $\alpha = 1/2$, $k = 0$ ---blue--- and $k = 50$---orange--- ~(graph in the left). Approximate 
solution of \eqref{solutionb} for $\alpha = 9/10$, $k = 0$ ---blue--- and $k = 50$---orange--- ~(graph in the right).}
\label{fig:fig_c}
\end{figure}

\section{Conclusions}\label{Sec:5}
The new scheme of the fractional integral in the sense Atangana-Baleanu has been proposed in this work. The error analysis of the novel scheme was 
successfully presented and the error obtained shows that the scheme is highly accurate. A new model of groundwater flowing within a leaky aquifer was 
suggested using the concept of fractional differentiation based on the generalised Mittag-Leffler function in order to fully introduce into mathematical 
formulation the complexities of the physical problem as the flow taking place in a very heterogeneous medium. The Mittag-Leffler operator provide more 
natural observed fact than the more used power law. The new model was analysed, as the uniqueness and the existence of the solution was investigated with 
care. To further access the accuracy of the proposed numerical scheme, we solved the new model numerically using this suggested scheme. Some simulations 
have been presented for different values of fractional order. We strongly believe that this numerical scheme will be applied in many fields of science, 
technology and engineering for those problems based on the new fractional calculus.


\section*{References}

\end{document}